\newcommand{\N}{\ensuremath{\mathbb{N}}}
\newcommand{\Z}{\ensuremath{\mathbb{Z}}}
\newcommand{\R}{\ensuremath{\mathbb{R}}}
\newcommand{\C}{\ensuremath{\mathbb{C}}}
\newcommand*{\defeq}{\mathrel{\vcenter{\baselineskip0.5ex \lineskiplimit0pt
			\hbox{\scriptsize.}\hbox{\scriptsize.}}}%
	=}
\theoremstyle{plain}
\newtheorem{theorem}{Theorem}[section]
\newtheorem{prop}[theorem]{Proposition}
\newtheorem{defin}[theorem]{Definition}
\newtheorem{lemma}[theorem]{Lemma}
\newtheorem{rem}[theorem]{Remark}
\newtheorem{example}[theorem]{Example}
\renewcommand{\thetheorem}{\arabic{section}.\arabic{theorem}}
\title{The motivic Adams and Adams-Novikov spectral sequences at odd primes}
\author{Sven-Torben Stahn}
\begin{document}

\maketitle
\begin{abstract}
We study the motivic Adams-Novikov spectral sequence at an odd prime over the complex and real numbers to compute the stable motivic homotopy groups of a suitable completion of the motivic sphere spectrum. We show that for odd primes over our choice of base fields the classical Adams-Novikov spectral sequence determines the motivic one and its differentials.
\end{abstract}	

\section{Introduction}
The motivic Adams spectral sequence is the motivic counterpart of the well known classical computational device. It was introduced by Morel and then further studied by Dugger-Isaksen (for example in \cite{DI} and \cite{Isa}), Hu-Kriz-Ormsby \cite{HKO} and others. Dugger and Isaksen have used it for extensive computations over the base fields \(\C\) at the prime \(2\) (see e.g. \cite{DI}) and over the real numbers \(\R\). They also used the additional information available in the motivic spectral sequence to deduce new information about the classical Adams spectral sequence \cite[Chapter 8]{DI}.
In this article we use the motivic Adams spectral sequence and the motivic Adams-Novikov spectral sequence to study the stable motivic homotopy groups of the spheres at odd primes. In Section 2 and 3 we recollect some basic facts about the motivic Adams and Adams-Novikov spectral sequences, along with some other preparational material. In section 4, we study the motivic versions of the Cartan-Eilenberg and the algebraic Novikov spectral sequence and their relationship. In the final section we show that for odd primes over the complex (Proposition \ref{PropChapter2}) and real (Proposition \ref{PropChapter2addendum}) numbers the classical Adams-Novikov spectral sequence determines the motivic one and its differentials. In a certain range, we can completely exclude the possibility of \(\tau\)-primary torsion (Proposition \ref{tauIsoProp}).
Apart from \(\tau\)-primary torsion (resp. $\theta$-primary torsion in the real case) associated to the existence of nontrivial differentials in the topological Adams-Novikov spectral sequence, the motivic Adams-Novikov spectral sequence looks very similar to the classical one. For a concrete example, we refer to \ref{firstExample}.\\

\subsection*{Acknowledgements}
This research was originally a part of my dissertation and conducted in the framework of the research training group
\emph{GRK 2240: Algebro-Geometric Methods in Algebra, Arithmetic and Topology},
which is funded by the DFG. I am indebted to my advisor Jens Hornbostel and Sean Tilson for helpful discussions.\\

\section{The motivic stable homotopy category}
In this paper we work in the motivic stable homotopy category \(\mathcal{SH}_k\) of \(\mathbb{P}^1\)-spectra over the base field \(k\) of Morel and Voevodsky (\cite{VOE} and \cite{MV}), where \(k\) is either the real or complex numbers. We use the following grading convention for the motivic spheres:
\begin{defin}
Define \(S^{1,0}\) as the \(\mathbb{P}^1\)-suspension spectrum of the simplicial sphere \((S^1,1)\) and \(S^{1,1}\) as the \(\mathbb{P}^1\)-suspension spectrum of \((\mathbb{A}^1-0,1)\). The suspension spectrum of \(\mathbb{P}^1\) is then equivalent to \(S^{2,1}\). Define 
\[S^{p,q}\defeq(S^{1,0})^{\wedge(p-q)}\wedge (S^{1,1})^q.\]
\end{defin}

This relates to the other common notation of \(S^{\alpha} = S^{1,1}\) by \(S^{p,q}=S^{p-q+q\alpha}\).\\
The spheres define the following subcategory of cellular motivic spectra (see \cite[Definition 8.1]{DI2}):
\begin{defin}

The category of finite cellular spectra \(\mathcal{SH}_k^{cell}\) in \(\mathcal{SH}_k\) is defined (c.f. \cite[Definition 2.1]{DI2}) as the smallest full subcategory that contains the spheres \(S^{p,q}\) and is closed under weak equivalences and cofiber sequences. The category of cellular spectra is the smallest such subcategory that is also closed under arbitrary colimits.
\end{defin}

By \cite[Lemma 2.2]{ROEN} a motivic spectrum is cellular if and only if it admits a cell presentation, i.e. it can be built by successively attaching cells \(S^{s,t}\).
\begin{defin}
\label{finiteType}
 A motivic cell spectrum \(X\) is called \emph{of finite type} if there is a \(k\in \N \) such that \(X\) admits a cell presentation with no cells \(S^{s,t}\) for \(s-t<k\) and only finitely many cells in other bidegrees. See \cite[Section 2]{HKO}.
\end{defin}

Over the base fields \C\ there is a topological realization functor called Betti realization \[R: \mathcal{SH}_\C \rightarrow \mathcal{SH}_{top}\] mapping into the stable homotopy category. Over the base field \R\ there is a similar functor \[R': \mathcal{SH}_\R \rightarrow \mathcal{SH}_{C_2}\]
mapping into the stable \(C_2\)-equivariant homotopy category. Both functors are strict symmetric monoidal left Quillen functors. There are many references for the construction and basic properties of these functors. We will use \cite[4.3]{JOA} as our main reference.
Betti realization maps the suspension spectrum of a smooth scheme over \C\ to the suspension spectrum of the topological space of its complex points, endowed with the analytic topology. The real realization functor maps the suspension spectrum of a smooth scheme over \R\ to the space of its complex points, endowed with the analytic topology, and the \(C_2\)-action is provided by complex conjugation. Consequently the behaviour of these functors on the sphere spectra is given by the assignments \(S^{p,q}\overset{R}\mapsto\ S^p\) and \(S^{p,q}\overset{R'}\mapsto S^{p-q+q\sigma}\), where \(\sigma\) denotes the sign representation of \(C_2\).\\
Because Betti realization maps the motivic spheres to the topological ones, it induces maps on homotopy groups
\[R: \pi_{pq}(X)\rightarrow \pi_p(R(X))\]
for every motivic spectrum \(X\in\mathcal{SH}_\C\). Therefore for every motivic spectrum \(E\in\mathcal{SH}_\C\) it also induces maps \[R: E_{pq}(X)\rightarrow R(E)_p(R(X))\]
and
\[R: E^{pq}(X)\rightarrow R(E)^p(R(X))\] on homology and cohomology  associated to that spectrum.\\
\ \\
The functors \(R\) and \(R'\) each have a right inverse
\[
c: \mathcal{SH}_{top} \rightarrow \mathcal{SH}_\C
\]
and
\[
c': \mathcal{SH}_{C_2} \rightarrow \mathcal{SH}_\R 
\]
which are called the constant simplicial presheaf functors. Both are strict symmetric monoidal. It is a result of Levine (see \cite[Theorem 1]{LEV}) that \(c\) is not only faithful but also full. For \(c'\) this only holds after a suitable completion (see \cite[Theorem 1]{HO}).\\
\ \\
The functor 
\(p^*:\mathcal{SH}_\R \rightarrow \mathcal{SH}_\C\)
induced by the map of schemes \(p:\text{Spec}(\C)\rightarrow \text{Spec}(\R)\)
relates the different realization functors in the following way (c.f. \cite[Proposition 4.13]{HO}): 
\[
\xymatrix{
\mathcal{SH}_\R \ar[r]^{p^*}\ar[d]^{R'} &
\mathcal{SH}_\C \ar[d]^R\\
\mathcal{SH}_{C_2} \ar[r]^{res^{C_2}_e}&
\mathcal{SH}_{top}\\
}\]
Here \(res^{C_2}_e\) denotes the functor which restricts from \(C_2\) to the trivial group.

\subsection{Coefficients of motivic cohomology and the dual motivic Steenrod algebra}
One key ingredient for the Adams spectral sequence is knowlegde of the Steenrod algebra or of the dual Steenrod algebra. Motivically, the Steenrod Algebra was described by Voevodsky for fields of characteristic zero and later by Hoyois, Kelly and {\O}stv{\ae}r in positive characteristic. While some interesting phenomena occur at the prime two, the motivic Steenrod algebra is more closely related to the classical topological Steenrod algebra at odd primes. To describe the motivic Steenrod algebra it is sufficient to know the coefficients of motivic coholomogy with \(\Z/l\Z\) - coefficients, so we will compute these coefficients over the base fields \(\R\) and \(\C\):

\begin{prop}Let \(l\neq 2\). be a prime\\
\begin{enumerate}
\item The coefficients \(H\Z/l^{**}\) of motivic cohomology are given as a ring by \[H\Z/l^{**}\cong \Z/l [\tau] \] with \(|\tau|=(0,1)\)  over \(k=\C\).
\item The coefficients \(H\Z/l^{**}\) of motivic cohomology are given as a ring by \[H\Z/l^{**}\cong \Z/l [\theta] \] with \(|\theta|=(0,2)\)  over \(k=\R\).
\item The map \(H\Z/l_\R^{**}\rightarrow H\Z/l_\C^{**}\) is given by \(\theta \mapsto \tau^2\).
\item For a fixed bidegree \(p,q\) with \(p\leq q\) there is a commutative square
\[\xymatrix{
 H_\R^{p,q}(pt,\Z/l)\ar[d]^{R'}_\cong \ar[r] & H_\C^{p,q}(pt,\Z/l)\ar[d]^{R}_\cong\\
 H^{p-q+q\sigma}(pt,\underline{\Z/l})\ar[r]^{res^{C_2}_e} & H_{sing}^p(pt, \Z/l)\\
}\]
\\Here \(H_k\) denotes motivic cohomology over the base field \(k\), \(H^{p-q+q\sigma}(pt,\underline{\Z/l})\) denotes Bredon cohomology (graded by the trivial representation and the sign representation \(\sigma\)) and \(H_{sing}\) denotes singular cohomology. The top map is the one induced by \(\text{Spec}(\C)\rightarrow \text{Spec}(\R)\), the bottom map is the restriction functor to the trivial group.\\
In particular, both \(\tau\) and \(\theta\) do not vanish under topological realization.
\end{enumerate}
\end{prop}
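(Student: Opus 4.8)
The plan is to dispatch the four parts in order, reducing (1) and (2) to a Galois-cohomology computation, (3) to naturality, and (4) to a comparison of the two realization functors together with an \(RO(\Z/2)\)-graded Bredon calculation. For (1) and (2) I would invoke the Beilinson--Lichtenbaum identification (the mod-\(l\) norm residue theorem), which for a characteristic-\(0\) field \(k\) gives \(H^{p,q}_k(pt,\Z/l)\cong H^p_{et}(\mathrm{Spec}\,k,\mu_l^{\otimes q})\) in the range \(p\le q\), and vanishing for \(p>q\). Over \(k=\C\) the absolute Galois group is trivial, so only \(p=0\) survives, \(H^{0,q}_\C\cong\mu_l^{\otimes q}\cong\Z/l\), and a primitive \(l\)-th root of unity gives the generator \(\tau\in H^{0,1}\), whence the ring \(\Z/l[\tau]\). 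Over \(k=\R\) the étale cohomology is the group cohomology \(H^p(\Z/2,\mu_l^{\otimes q})\), where complex conjugation acts on \(\mu_l^{\otimes q}\) by \((-1)^q\). Since \(l\) is odd, \(2=|\Z/2|\) is invertible in \(\Z/l\), so the higher group cohomology vanishes and only \(H^0\) contributes; the invariants \((\mu_l^{\otimes q})^{\Z/2}\) equal \(\Z/l\) for \(q\) even and \(0\) for \(q\) odd. This isolates exactly the even twists, generated by \(\theta\in H^{0,2}\), giving \(\Z/l[\theta]\).

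For (3) I would note that both \(\theta\) and \(\tau^2\) are images of the canonical generator of \(\mu_l^{\otimes 2}\): base change \(\mathrm{Spec}\,\C\to\mathrm{Spec}\,\R\) induces the restriction \(H^0(\Z/2,\mu_l^{\otimes 2})\hookrightarrow\mu_l^{\otimes 2}\), compatibly with the Beilinson--Lichtenbaum identifications by naturality, which forces \(\theta\mapsto\tau^2\) after a compatible choice of generators. For (4), the square commutes by the compatibility of the two realization functors assembled in \cite[4.3]{JOA}: restricting \(R'\) along \(e\subset\Z/2\) and composing with base change agrees with \(R\). The right map \(R\) is an isomorphism because complex realization carries \(H\Z/l_\C\) to its singular counterpart and sends \(\tau\) to a generator of \(H^0_{sing}(pt,\Z/l)\); in the range \(p\le q\) both sides are \(\Z/l\) for \(p=0\) and \(0\) otherwise, so the ring map \(\Z/l[\tau]\to\Z/l\) is a bidegreewise isomorphism.

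The substantive point is the left map \(R'\), which carries \(H\Z/l_\R\) to the genuine equivariant spectrum \(H\underline{\Z/l}\); I must therefore compute \(H^{(p-q)+q\sigma}(pt,\underline{\Z/l})\) and match it with \(\Z/l[\theta]\). I would run the cofibre sequence \((\Z/2)_+\to S^0\xrightarrow{a_\sigma}S^\sigma\): its long exact sequence, together with the fact that the restriction of the constant Mackey functor is the identity, shows that multiplication by \(a_\sigma\) is an isomorphism outside the borderline underlying dimensions \(\{-1,0\}\), and that the whole \(\sigma\)-line \(H^{\ast-\sigma}\) vanishes. Because \(l\) is odd, \(\sigma\) is not \(\underline{\Z/l}\)-orientable but \(2\sigma\) is (its determinant action is trivial), producing an \emph{invertible} orientation class \(u\in H^{2\sigma-2}(pt,\underline{\Z/l})\) and hence period-\(2\) periodicity \(H^V\cong H^{V+2\sigma-2}\). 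Feeding in the integer-graded input \(H^n(pt,\underline{\Z/l})=\Z/l\) for \(n=0\) (else \(0\)), periodicity gives \(\Z/l[u^{\pm1}]\), concentrated where the underlying degree is \(0\); the range \(p\le q\) selects exactly the non-negative powers of \(u\), matching \(\Z/l[\theta]\) with \(\theta=u\). Under \(res^{\Z/2}_e\) the class \(u^{q/2}\) maps to the generator of \(H^0_{sing}\), so the square commutes on \(\theta\) and \(R'(\theta)\ne0\).

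The main obstacle I expect is precisely this equivariant computation: not the positive powers of \(u\) (matched to the motivic generators by orientability), but the verification that there are \emph{no extra classes} in the range \(p\le q\) — that the odd-\(\sigma\) lines vanish identically and that there is no negative cone. This is exactly where oddness of \(l\) is decisive: because \(2\) is invertible the Tate cohomology \(\hat H^\ast(\Z/2,\Z/l)\) vanishes and the isotropy-separation sequence degenerates, so the \(\sigma\)-non-orientability forces a clean period-\(2\) pattern in which the odd columns are isomorphic to the (already zero) \(\sigma\)-line and \(u\) is a genuine unit rather than the source of a divided-power tower. The other point to pin down cleanly is the identification \(R'(H\Z/l_\R)\simeq H\underline{\Z/l}\) (and \(R(H\Z/l_\C)\simeq H\Z/l\)), which I would take from the known structure of the realization functors.
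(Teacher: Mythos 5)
Your treatment of parts (1)--(3) is essentially identical to the paper's proof: vanishing for \(p>q\) plus the Geisser--Levine/Beilinson--Lichtenbaum identification reduces everything to Galois cohomology of the trivial group resp.\ of \(\mathrm{Gal}(\C/\R)\) acting on \(\mu_l^{\otimes q}\) by \((-1)^q\), and part (3) is naturality of restriction to fixed points. You are in fact slightly more explicit than the paper about the key point that the higher cohomology of \(\Z/2\) dies because \(2\) is invertible in \(\Z/l\).

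Part (4) is where you genuinely diverge. The paper proves it in one line, as a specialization of \cite[Thm.~4.18]{HO}; you instead try to reprove that comparison by hand. Your equivariant computation is correct and is exactly the content hidden behind that citation: for odd \(l\) the Tate construction vanishes, so \(H\underline{\Z/l}\) is Borel, its geometric fixed points vanish, and \(H^{\star}(pt,\underline{\Z/l})\cong \Z/l[u^{\pm 1}]\) with \(u\in H^{2\sigma-2}\), whose nonnegative powers occupy precisely the range \(p\le q\). (One local correction: invertibility of \(u\) does not follow from orientability of \(2\sigma\) alone --- for \(\underline{\Z}\)- or \(\underline{\Z/2}\)-coefficients the corresponding Thom classes exist but are not units; the actual reason is the Tate-vanishing argument you only supply at the very end.) The genuine gap is in how you conclude that the vertical maps are isomorphisms. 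Commutativity of the square plus the fact that both columns are abstractly \(\Z/l\) in the same bidegrees does not make the natural maps isomorphisms; for that you need (i) the spectrum-level identifications \(R(H\Z/l_\C)\simeq H\Z/l\) and \(R'(H\Z/l_\R)\simeq H\underline{\Z/l}\), and (ii) the fact that \(R(\tau)\) generates \(H^0_{sing}(pt,\Z/l)\). You assert (ii) as known, but (ii) is precisely the proposition's ``in particular'' conclusion, so as written your argument for that conclusion is circular. To close it you must prove (ii) independently --- e.g.\ via the compatibility of Betti realization with the \'etale--singular comparison isomorphism, under which \(\tau\) is a primitive \(l\)-th root of unity in \(H^0_{\acute{e}t}(\C,\mu_l)\) and hence maps to a generator of singular \(H^0\) --- or simply cite \cite[Thm.~4.18]{HO}, as the paper does, since that theorem packages exactly these compatibilities. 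Granting (ii), the rest of your argument does go through: commutativity gives \(\mathrm{res}(R'(\theta))=R(\tau^2)\neq 0\), so \(R'(\theta)\) is a unit multiple of \(u\) and both verticals are bidegreewise isomorphisms.
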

\begin{proof}
We know that \(H\Z/l^{**}=0\) for $q<p$ ((cf. \cite[Theorem 3.6]{MVW})).\\ Let \(q\geq p\). Then there is an isomorphism from motivic to \'etale cohomology: \[H^{p,q}(Spec(k), \mathbb{Z}/l) \cong H_{\acute{e}t}^p(k,\mu_l^{\otimes q})\]
This isomorphism respects the product structure (\cite[1.2,4.7]{GL}).\\
The \'etale cohomology groups \(H_{\acute{e}t}^p(k,\mu_l^{\otimes q})\) can be computed as the Galois cohomology of the separable closure of the base field (in both cases the complex numbers) with coefficients in the $l$-th roots of unity. The action of the absolute Galois group $G$ is given by the trivial action if $k=\mathbb{C}$ and by complex conjugation if $k=\mathbb{R}$:
\[H_{\acute{e}t}^p(k,\mu_l^{\otimes q})\cong H^p(G, \mu_l^{\otimes q}(\C))\]

\begin{enumerate}
\item For $k=\mathbb{C}$, these groups all vanish for $p\neq 0$ because the Galois group is the trivial group. In the degree $p=0$ they are isomorphic to $\mathbb{Z}/l$ for all $q\geq 0$. The multiplicative structure is given by the tensor product of the modules.\\
\item For $k=\mathbb{R}$, we have the following isomorphism of \(G\)-modules:\\
\xymatrix{
 \mu_l \otimes \mu_l \ar[d]^\cong \ar[rr] & & \mu_l \ar[d]_\cong\\
 \Z/l \otimes \Z/l\ar[rr]_{a\otimes b \mapsto ab}^\cong & & \Z/l\\
}\\
Here $G$ acts on the top left hand side by complex conjugation on each factor, on the lower left hand side by the assignment \(x\mapsto -x\) on each factor, and trivial on the right hand side. Hence $\mu_l^{\otimes q}$ is isomorphic as a $G$-module to $\mu_l$ equipped with the trivial action in degrees with $q$ even, and with the Galois action in degrees with $q$ odd. Since the latter has no nontrivial fixed points, the description above follows additively. The multiplicative statement follows from the same reasoning as for \(k=\C\).\\
\item On the level of Galois cohomology, the map  induced by \(\R \rightarrow \C\) corresponds to the one induced by the map of groups which embeds the trivial group (the absolute Galois group of \C) into the Galois group of \R. Since all the Galois cohomology groups are concentrated in degree 0, the map is just the inclusion of the fixed points in $\mu_l^{\otimes q}$ under the action of the Galois group of \R~ into the fixed points of $\mu_l^{\otimes q}$ under the action of the trivial group, and the third statement follows.
\item The statement follows from \cite[Prop. 4,12]{HO} and \cite[Thm. 4.18]{HO}.
\end{enumerate}
\end{proof}

\begin{rem}
We also have \(H\Z/l_{**}=H\Z/l^{-*,-*}\). In an abuse of notation, we denote the elements in \(H\Z/l_{**}\) corresponding to \(\tau\) and \(\theta\) by the same name, where the bidegree is the same as above multiplied by -1.
\end{rem}

With the knowledge of the coefficients \(H\Z/l_{**}\) and the fact that for odd primes, they are concentrated in simplicial degree 0, we can now give a description of the dual motivic Steenrod algebra.\\
The computation of the motivic mod-$l$ Steenrod algebra over base fields of characteristic 0 is due to Voevodsky in \cite{VOE2}. The implications for the dual motivic Steenrod algebra are for example written down in the introduction of \cite{HKO}.
\begin{prop}
Let $k$ be a base field of characteristic 0, and let $l$ be an odd prime. The dual motivic Steenrod algebra \(A_{**}\) and its Hopf algebroid structure over \(k\in\{\C,\R\}\) for \(l\) an odd prime can be described as follows: \\
\[A_{**}=H\Z/l_{**}[\tau_0,\tau_1,\tau_2,...,\xi_1,\xi_2,...]/(\tau_i^2=0)\]
Here \(|\tau_i|=(2l^i-1,l^i-1)\) and \(|\xi_i|=(2l^i-2,l^i-1)\).\\ The comultiplication is given by
\[\Delta(\xi_n) = \sum_{i=0}^n \xi_{n-i}^{l^i} \otimes \xi_i\] where \(\xi_0:=1\), and
\[\Delta(\tau_n) = \tau_n\otimes 1 + \sum_{i=0}^n \xi_{n-i}^{l^i} \otimes \tau_i\]
\end{prop}

\begin{rem}
The $\tau_i$ are not related to the element $\tau$ defined above. Neither the product nor the coproduct increase the number of \(\tau_i\)'s involved in any given expression in \(A_{**}\). Hence \(A_{**}\) can be graded as an \(A_{**}\) -comodule by this number. This is similar to the classical situation at odd primes.\\
\end{rem}

\section{Generalized motivic Adams spectral sequences}
The motivic Adams spectral sequences is a computational device based on the motivic Eilenberg-MacLane spectrum. It converges to the homotopy groups of the $l$-completion \(X^{\wedge}_l\) of a motivic spectrum $X$, which we define as the Bousfield localization of $X$ at the mod-\(l\) Moore spectrum \(S/l\).

The motivic Adams Novikov spectral sequence is a similar computational device, but based on the motivic Brown-Peterson spectrum \(ABP\). It also converges to the homotopy groups of the $l$-completion.

Both are generalized Adams spectral sequences. The construction of these kind of spectral sequences is well known and mirrors the classical construction:
If \(E\) is a motivic homotopy ring spectrum such that $E_{**}E$ is flat as a (left) module over the coefficients \(E_{**}\), one can associate a flat Hopf algebroid to \(E\) (See \cite[Lemma 5.1]{NOS}). The category of comodules over this Hopf Algebroid is abelian and thus permits homological algebra (Check \cite[Appendix 1]{RAV} for the definition and basic properties of Hopf algebroids). \\

Defining \(\bar{E}\) as the cofiber of the unit map \(S\rightarrow E\) one obtains  the following cofiber sequences
\[\bar{E}^{\wedge(s+1)}\wedge X \rightarrow \bar{E}^{\wedge s}\wedge X \rightarrow E\wedge \bar{E}^{\wedge s}\wedge X\]
for any motivic spectrum \(X\) by smashing the defining cofiber sequence of \(\bar{E}\) with \(\bar{E}^s\wedge X\). These cofiber sequences give rise to so called the canonical \(E_{**}\)-Adams resolution:
\[
\xymatrix
{
...\ar[r]\ar[d]&
\bar{E}^{\wedge(s+1)} \wedge X \ar[r]\ar[d] &
\bar{E}^{\wedge s} \wedge X \ar[r]\ar[d]&
...\ar[r]\ar[d]&
\bar{E}\wedge X \ar[r]\ar[d]&
X\ar[d]\\
...&
E\wedge \bar{E}^{\wedge(s+1)} \wedge X &
E\wedge \bar{E}^{\wedge s} \wedge X &
...&
E\wedge \bar{E} \wedge X &
E \wedge X\\
}
\]
The long exact sequences of homotopy groups associated to these cofiber sequences form a trigraded exact couple and thus give rise to a trigraded spectral sequence \(E_r^{s,t,u}(E,X)\) where the differentials have the form: \[d_r: E_r^{s,t,u} \longrightarrow E_r^{s+r,t+r-1,u}\]

The long exact sequences of homotopy groups of the canonical \(E_{**}\)-Adams resolution can be identified with the (reduced) cobar complex \(C^*(E_{**}(X))\) as in the topological case by virtue of the isomorphism 
\[\pi_{**}(E\wedge E \wedge X)\cong E_{**}(E)\underset{E_{**}}{\otimes}E_{**}(X)\]
(see \cite[Lemma 5.1(i)]{NOS})
and consequently the \(E_2\)- page of the $E$-Adams spectral sequence can be described as:
\[E_2^{s,t,u}(E,X)=\text{Cotor}^{s,t,u}_{E_{**}(E)}(E_{**},E_{**}(X))\]
where \(\text{Cotor}\) denotes the derived functors of the cotensor product in the category of \(E_{**}(E)\)-comodules.

\subsection{Convergence of generalized Adams spectral sequences}
Convergence in this general situation is discussed by Bousfield in \cite{BOU}. Under certain assumptions, the spectral sequence converges completely (c.f.\cite[Chapter 6]{BOU}) to a filtration of the homotopy groups of the so called \(E\)-nilpotent completion \(X_E^{\wedge}\) of \(X\) (\cite[Chapter 5]{BOU}). In the case of the homological motivic Adams spectral sequence where \(E=H\Z/l, X=S\) and of the $l$-complete motivic Adams-Novikov spectral sequence where \(E=ABP^\wedge_l, X=S\), this turns out to be the completion at \(l\) and \(\eta\) by work of Hu, Kriz and Ormsby in \cite[Theorem 1]{HKO}. Their result tells us that for an odd prime \(l\)  we have \(S_{H\Z/l}^\wedge \cong S^{\wedge}_l\) over \(k=\C\)
and \(S_{H\Z/l}^\wedge \cong S^{\wedge}_{l, \eta}\) over \(k=\R\).

In general the \(\eta\) and \(l\)-completed homotopy groups of spheres differ from the \(l\)-completed ones. In certain bidegrees however Ormsby, Röndigs and {\O}stv{\ae}r have showed that they agree, using work of Bachmann in \cite{BM}:
\begin{prop}
	\label{OroResult}
	\cite[Theorem 1.6]{ORO}
	Let \(k=\R\) and \(l\) be an odd prime. There is an isomorphism
	\[
	\pi_{m,n}(S^\wedge_{l,\eta})\cong \pi_{m,n}(S^\wedge_{l})
	\]
	whenever the topological stable stem \(\pi_{m-n}(S^\wedge_{l})\cong (\pi_{m-n}S)^\wedge_{l}\) vanishes.
\end{prop}

We will also see later that both the motivic Adams and the motivic Adams Novikov spectral sequence converge strongly because of a vanishing line in these situations.

\begin{rem}
In \cite[7.3]{DI} Dugger and Isaksen prove that the bicompletion of the sphere spectrum \(S_{H\Z/l,ABP}\) at both the Eilenberg-MacLane and the algebraic Brown-Peterson spectrum, towards whose homotopy groups the motivic Adams-Novikov spectral sequence converges, is equivalent to the nilpotent completion at the motivic Eilenberg-MacLane spectrum alone:
\[S_{H\Z/l,ABP}^\wedge\cong S_{H\Z/l}^\wedge\]
They state their result for \(l=2\) and \(k=\C\), but the arguments they give work equally well if $l$ is an odd prime and over the base field \R. Therefore the result of Hu, Kriz and Ormsby is relevant for us also in the case of the motivic Adams Novikov spectral sequence.
\end{rem}

Specializing to the case \(k=\C\), one can consider the effect of Betti realization on the canonical Adams resolution. Betti realization preserves cofiber sequences and smash products. This implies \(R(\bar{E})=\overline{R(E)}\), and the realization of the canonical \(E_{**}\)-Adams resolution for \(X\) is the canonical \(R(E)_{*}\)-Adams resolution for the topological spectrum \(R(X)\). The induced maps on the long exact sequences of homotopy groups define a map of exact couples and therefore one of spectral sequences. Summing up, we have:
\begin{rem}
Let \(k=\C\). Then Betti realization induces a map of spectral sequences \(R_{E,X}: E_r^{s,t,u}(E,X)\rightarrow E_r^{s,t}(R(E),R(X))\)
\end{rem}
By a similar reasoning we get the following in the case \(k=\R\):
\begin{rem}
Let \(k=\R\). The functor \(\mathcal{SH}_\R\rightarrow \mathcal{SH}_\C\) associated to \(\text{Spec}(\C)\rightarrow \text{Spec}(\R)\) gives rise to a map of spectral sequences \[E_r^{s,t,u}(E,X)\rightarrow E_r^{s,t,u}(E_\C,X_\C).\]
\end{rem}

\subsection{The motivic Adams-Novikov spectral sequence}
Just as we needed the coefficients of motivic cohomology and the structure of the dual motivic Steenrod algebra for the Adams spectral sequence, we need the coeffients of the (\(l\)-completed) algebraic Brown-Peterson spectrum and the algebraic structure of the Hopf algebroid generated by it as input for the motivic Adams-Novikov spectral sequence.\\

Let \(k\subset \C\) and \(l\) be an arbitrary prime. Let \(MGL\) denote the algebraic cobordism spectrum of Voevodsky (see \cite[6.3]{VOE}) and \(MU\) the topological cobordism spectrum. The motivic Brown-Peterson spectrum \(ABP\) was first defined by Vezzosi using the Quillen idempotent in \cite{VEZ}. Hoyois proved in \cite[Remark 6.20]{HOY} that it can be defined equivalently by either the motivic Landweber exact functor theorem or as a quotient of \(MGL_{(l)}\) by any regular sequence of elements generating the vanishing ideal of the \(l\)-typical formal group law.\\
We use the same definition as \cite[6.3.1]{JOA}):
\begin{defin} Choose elements \(a^{top}_i \in MU_{2i}\) satisfying the conditions in \cite[Section 6.1]{HOY} and write \(a_i\) for their image in \(MGL_{2i,i}\) under the canonical map \(L\cong MU_{2*} \rightarrow MGL_{2*,*}\). Then \(R(a_i)=a_i^{top}\), and we can define the motivic Brown-Peterson spectrum:
	\[ABP:=MGL_{(l)}/(a_i|i\neq l^j-1)\]
	Additionally, define \(v_n^{top}:=a_{l^n-1}^{top}\) and \(v_n:=a_{l^n-1}\).
\end{defin}
We summarise a few properties of \(ABP\) that we will use later:

\begin{rem}
	\begin{enumerate}
		\item The sphere spectrum $S$ is clearly a cell spectrum of finite type over an arbitrary base field. So is the Brown-Peterson spectrum $ABP$ (regardless of the prime \(l\)) because \(MGL\) is cellular.
		\item As mentioned in \cite[6.3.2]{JOA}, we have \(R(ABP)=BP\) and \(R(v_n)=v_n^{top}\) over \(k=\C\).
		\item By \cite[Thm. 6.11]{HOY} there is an isomomorphism \(H\Z/l_{**}(ABP)\cong P_{**}\) as \(A_{**}\)-comodules, where \(P_{**}\) is the polynomial subalgebra \[P_{**}=H\Z/l_{**}[\xi_1,\xi_2,...]\] of the dual motivic Steenrod algebra.
	\end{enumerate}
\end{rem}

Both the Adams spectral sequence and the slice spectral sequence converge to the nilpotent completion of the homotopy groups of \(ABP\), so a priori the completion at the motivic Hopf map \(\eta\) might be involved. Because the motivic Hopf map \(\eta\) is not detected by \(ABP\) however, this does not matter:
\begin{prop}
	Let \(k\) be any field. Then \(ABP^\wedge_{\eta,l} \simeq ABP^\wedge_{l}\). \end{prop}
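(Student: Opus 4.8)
The plan is to show that the motivic Hopf map $\eta$ acts as the zero self-map on $ABP$, and hence on its $l$-completion, so that completing at $\eta$ is vacuous. Concretely, I would first reduce the assertion $ABP^\wedge_{\eta,l}\simeq ABP^\wedge_l$ to the claim that $\eta$ acts as zero on $ABP^\wedge_l$, and then verify that the $\eta$-completion of any motivic spectrum on which $\eta$ acts nilpotently is equivalent to that spectrum itself.

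First I would analyze the action of $\eta$ on $ABP$. Since $ABP$ is a motivic ring spectrum, the self-map ``multiplication by $\eta$'' is induced by the element $\eta\cdot 1\in\pi_{1,1}(ABP)$, the image of $\eta\in\pi_{1,1}(S)$ under the unit. Now $ABP$ is complex orientable, being a retract of $MGL_{(l)}$, and for a $GL$-oriented theory the orientation forces $\eta$ to vanish; equivalently, $\eta$ lies in the kernel of the unit-induced map $\pi_{1,1}(S)\to\pi_{1,1}(ABP)$. Over $\C$ and $\R$ one may alternatively read this off the explicit description of $\pi_{**}(ABP)$, which is concentrated in even first degree, so that $\pi_{1,1}(ABP)=0$. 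Either way $\eta\cdot 1=0$ in $\pi_{1,1}(ABP)$, and the multiplication-by-$\eta$ self-map of $ABP$ is null. Because the unit $S\to ABP^\wedge_l$ factors through $ABP$, the same element vanishes in $\pi_{1,1}(ABP^\wedge_l)$, so $\eta$ likewise acts as the zero self-map on $ABP^\wedge_l$.

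Next I would compute the $\eta$-completion under this vanishing. If $\eta$ acts as zero on a spectrum $Y$ (here $Y=ABP^\wedge_l$), then each stage $Y/\eta^n=\mathrm{cofib}(\Sigma^{n,n}Y\xrightarrow{0}Y)$ splits as $Y\vee\Sigma^{n+1,n}Y$, and the transition maps in the tower defining $Y^\wedge_\eta$ become, under this splitting, the identity on the first summand and the zero map on the second. Passing to the homotopy limit, the first summand contributes $Y$ while the tower of second summands has vanishing $\lim$ and $\lim^1$, whence $Y^\wedge_\eta\simeq Y$. Applying this to $Y=ABP^\wedge_l$ and combining with the reduction above yields $ABP^\wedge_{\eta,l}\simeq ABP^\wedge_l$.

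The main obstacle is the input that $\eta$ genuinely vanishes on $\pi_{1,1}(ABP)$ over an arbitrary base field: over $\C$ and $\R$ this is immediate from the known even concentration of the coefficients, but to cover every field one must invoke the orientability of $ABP$, i.e. that it is a retract of $MGL_{(l)}$, together with the structural fact that $\eta$ is annihilated on $GL$-oriented theories. A secondary, more routine point is that the two completions may be performed in either order, that is, that $l$-completion preserves the vanishing of $\eta$; this is exactly what legitimizes the reduction made in the first step.
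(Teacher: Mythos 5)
Your proof is correct and takes essentially the same route as the paper: the paper likewise deduces \(ABP\wedge \eta=0\) and \(ABP^\wedge_l\wedge\eta=0\) from the fact that \(MGL\wedge\eta=0\) (citing Joachimi's Lemma 7.1.1, which is exactly the orientability input you invoke), and then concludes that the tower \(\{ABP^\wedge_l/\eta^n\}\) collapses so that the \(\eta\)-completion does nothing. If anything, your analysis of the homotopy limit is more careful than the paper's one-line assertion; your splitting argument (or, even more directly, noting that the fiber of \(Y\rightarrow \mathrm{holim}\, Y/\eta^n\) is the homotopy limit of the tower \(\{\Sigma^{n,n}Y\}\) with null transition maps, hence contractible) is what actually justifies that step.
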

\begin{proof}
	Since \(MGL \wedge \eta=0\) (for a proof, see e.g. \cite[Lemma 7.1.1]{JOA}), we also have \(ABP \wedge \eta=0\) and \(ABP^\wedge_l \wedge \eta=0\). Hence all maps in the homotopy limit \[\underset{\leftarrow}{\text{holim}}ABP^\wedge_l/\eta^n\] are equivalences, and this homotopy limit modells the completion at $l$ and \(\eta\).
\end{proof}

\begin{prop}
	If \(k=\C\) and \(l\) an odd prime, then \[\pi_{**}(ABP_l^\wedge)={\Z_{l}}[\tau,v_1,v_2,...]\]
	If \(k=\R\) and \(l\) an odd prime, then \[\pi_{**}(ABP_l^\wedge)={\Z_{l}}[\theta,v_1,v_2,...]\]
	Here the elements \(v_i\) have bidegree \((2l^i-2,l^i-1)\).
\end{prop}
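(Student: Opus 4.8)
The plan is to compute \(\pi_{**}(ABP^\wedge_l)\) by running the motivic \(H\Z/l\)-Adams spectral sequence of the previous subsection in the case \(E=H\Z/l\), \(X=ABP^\wedge_l\) --- one of the three convergent instances isolated there. Over both base fields we have \(ABP^\wedge_l\simeq ABP^\wedge_{H\Z/l}\): over \(\C\) this is the earlier remark, and over \(\R\) it follows from the Hu-Kriz-Ormsby identification \(ABP^\wedge_{H\Z/l}\simeq ABP^\wedge_{l,\eta}\) together with \(ABP^\wedge_{\eta,l}\simeq ABP^\wedge_l\). Since the \(H\Z/l\)-Adams spectral sequence sees \(X\) only through its mod-\(l\) homology and the completion map is a mod-\(l\) homology isomorphism, the \(E_2\)-page is
\[E_2=\text{Cotor}_{A_{**}}\bigl(H\Z/l_{**},\,H\Z/l_{**}(ABP)\bigr)=\text{Cotor}_{A_{**}}\bigl(H\Z/l_{**},\,P_{**}\bigr),\]
using the isomorphism \(H\Z/l_{**}(ABP)\cong P_{**}\) recalled above, where \(P_{**}=H\Z/l_{**}[\xi_1,\xi_2,\dots]\).

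The heart of the argument is the computation of this \(\text{Cotor}\). The grading of \(A_{**}\) by the number of \(\tau_i\)'s, noted in the proposition on the dual Steenrod algebra, exhibits \(A_{**}\) as \(P_{**}\otimes_{H\Z/l_{**}}E_{**}\) with exterior part \(E_{**}=E(\tau_0,\tau_1,\dots)\); killing the \(\xi_i\) presents \(E_{**}\) as a quotient Hopf algebroid and identifies \(P_{**}\cong A_{**}\,\square_{E_{**}}H\Z/l_{**}\) as an extended comodule. As all the Hopf algebroids involved are flat over \(H\Z/l_{**}\), I would then invoke the change-of-rings isomorphism for extended comodules,
\[\text{Cotor}_{A_{**}}(H\Z/l_{**},P_{**})\cong\text{Cotor}_{E_{**}}(H\Z/l_{**},H\Z/l_{**}),\]
reducing the problem to the \(\text{Cotor}\) of a primitively generated exterior Hopf algebra. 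That is the polynomial algebra
\[\text{Cotor}_{E_{**}}(H\Z/l_{**},H\Z/l_{**})=H\Z/l_{**}[a_0,a_1,a_2,\dots],\qquad a_i\in\text{Cotor}^{1,\,2l^i-1,\,l^i-1},\]
with \(a_i\) dual to \(\tau_i\). Thus \(a_i\) sits in filtration \(1\), stem \(t-s=2l^i-2\) and weight \(l^i-1\); in particular \(a_0\) lies in stem \(0\) and detects multiplication by \(l\).

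Next I would pass from \(E_2\) to \(E_\infty\). Each monomial \(c\cdot a_0^{e_0}a_1^{e_1}\cdots\) with \(c\in H\Z/l_{**}\) has stem \(\sum_i e_i(2l^i-2)\), which is even, so \(E_2\) is concentrated in even stems; since \(d_r\) lowers the stem by one, all differentials vanish and \(E_\infty=E_2\). To finish I would resolve the extensions: the unique generator in stem \(0\) is \(a_0\), so the infinite \(a_0\)-towers are the only possible nontrivial extensions, and in each monomial they assemble the \(\Z/l\)'s into a copy of \(\Z_l\). This upgrades the coefficients from \(\Z/l\) to \(\Z_l\), promotes each \(a_i\) to a polynomial generator \(v_i\) of bidegree \((2l^i-2,l^i-1)\), and leaves \(\tau\) (resp. \(\theta\)) as a surviving generator, yielding \(\pi_{**}(ABP^\wedge_l)={\Z_l}_{**}[\tau,v_1,v_2,\dots]\) over \(\C\) and \({\Z_l}_{**}[\theta,v_1,v_2,\dots]\) over \(\R\).

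The \(\R\)-case requires no separate work: the computation above is purely homological and depends only on the shape of \(A_{**}\), which by the first proposition is the same over \(\C\) and \(\R\) once the coefficient ring \(\Z/l[\tau]\) is replaced by \(\Z/l[\theta]\). I expect the genuinely delicate step to be the extension problem --- verifying that the \(a_0\)-towers really give torsion-free \(\Z_l\)-summands (rather than a nontrivial multiplicative extension that would truncate them) and that Bousfield's convergence is strong enough to read off the completed homotopy ring. Over \(\C\) this can be cross-checked with topology: the realization functor \(R\) carries the spectral sequence to the classical \(H\Z/l\)-Adams spectral sequence for \(BP\) with \(R(v_i)=v_i^{top}\), whose abutment \(\Z_l[v_1,v_2,\dots]\) pins down the \(a_0\)-extensions, so that the motivic answer is the classical one thickened by the polynomial class \(\tau\). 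Over \(\R\) no realization to an ordinary spectrum is available, so there the extensions and convergence must be argued intrinsically from the even-stem structure of \(E_\infty\).
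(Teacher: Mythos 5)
Your proposal follows essentially the same route as the paper: the paper likewise runs the motivic $H\Z/l$-Adams spectral sequence for $ABP$, identifies the $E_2$-page as $\Z/l[\tau,Q_0,Q_1,\dots]$ (your $H\Z/l_{**}[a_0,a_1,\dots]$), observes that it collapses because apart from the weight grading it agrees with the classical Adams spectral sequence for $BP$, and settles the $a_0$-extension problem by topological realization over $\C$. The difference is only one of detail: you make explicit the change-of-rings identification of $E_2$, the even-stem collapse argument, and the reduction $ABP^\wedge_{H\Z/l}\simeq ABP^\wedge_{l,\eta}\simeq ABP^\wedge_l$ needed over $\R$ --- points the paper's very terse proof leaves implicit, since its stated argument treats only $k=\C$.
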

\begin{proof}
	Let \(k=\C\). Consider the motivic Adams spectral sequence for \(ABP\). Since \(ABP\) is cellular it converges to \(\pi_{**}(ABP_l^\wedge)\). The \(E_2 \)-page has the form \(E_2^{s,t,u}=\Z/l[\tau, Q_0,Q_1,..]\), where \(Q_n\) lives in degree \((1,2l^n-2,l^n-1)\). Apart from the grading by weight this agrees with the classical ASS for \(BP\), and the spectral sequence collapses at the \(E_2\)-page because there is no room for nontrivial differentials. The extension problem is solved by considering topological realization over \C.
	
\end{proof}

\begin{rem}
	More generally, the coefficients of the $l$-completed motivic cobordism spectrum \(MGL_l^\wedge\) have been computed over a $l$-low dimensional base field (a base field with exponential characteristic $p\neq l$ s.t. \(cd_l(k)\leq 2\)) by Ormsby and {\O}stv{\ae}r using the slice spectral sequence, cf. \cite[Corollary 2.6]{OO}. In particular, these computations cover the case of \(k=\R\) if \(l\) is an odd prime \(l\neq 2\).
\end{rem}

\section{The relation of the motivic Adams spectral sequence and the motivic Adams Novikov spectral sequence at odd primes}
The Adams spectral sequence and the Adams-Novikov spectral sequence at the prime 2 both provide relevant information about the homotopy groups of the spheres in the classical topological setting (e.g. \cite[Theorem 4.4.47]{RAV}) and also motivically over \(\C\) (see \cite[Chapter 8]{DI}). In fact, one can use knowledge gained in one spectral sequence to deduce differentials in the other and vice versa. In contrast to this, the classical Adams spectral sequence at odd primes yields strictly less information than the classical Adams Novikov spectral sequence. This can be stated precisely via two auxiliary spectral sequences (\cite[4.4]{RAV}), the Cartan-Eilenberg spectral sequence and the algebraic Adams-Novikov spectral sequence.
Here the Cartan-Eilenberg spectral sequence is associated to the extension of Hopf algebroids \(P_*\rightarrow A_* \rightarrow E_*\). This extension is split at odd primes, causing the Cartan-Eilenberg spectral sequence to collapse. The algebraic Adams-Novikov spectral sequence is constructed by filtering the Hopf algebroid of \(BP\) by its coaugmentation ideal. Both spectral sequences start from the same, albeit differently indexed \(E_2\)-term and converge to the \(E_2\)-terms of the Adams spectral sequence and the Adams-Novikov spectral sequence respectively. Together they form the following square:
\[
\xymatrixcolsep{2pc}
\xymatrix{
\text{Cotor}_{P_{*}}(H\Z/l_{*},\text{Cotor}^{s,t,u}_{E_{*}}(H\Z/l_{*},H\Z/l_{*})) \ar@{=>}[rr]^{AANSS}\ar@{=>}[d]^{CESS} & & Cotor_{BP^\wedge_{l,**}BP^\wedge_l}(BP^\wedge_l,BP^\wedge_l) \ar@{=>}[d]^{ANSS} \\
 Cotor_{A_**}(H\Z/l_{**},H\Z/l_{**})\ar@{=>}[rr]^{ASS} && \pi_{**}(S^\wedge_l)\\
}\]
\ \\
Motivically the situation over \(\C\) at the prime 2 is described in \cite[8.1]{AM}. It is even closer to the classical picture at odd primes for both \(k=\C\) and \(k=\R\), because the relevant Hopf algebroids are just the regraded classical Hopf algebroids base changed to the coefficients of motivic homology. As a consequence, for the purpose of the computation of the homotopy groups of the spheres, we can focus entirely on the motivic Adams-Novikov spectral sequence.\\

\subsection{The motivic Cartan Eilenberg spectral sequence}
\begin{defin}
Define \(P_{**}\) as the polynomial sub-Hopfalgebra of the dual motivic Steenrod Algebra and \(E_{**}\) as the exterior part, i.e.
\[P_{**}=H\Z/l_{**}[\xi_1,\xi_2,...]\]
\[E_{**}=H\Z/l_{**}[\tau_0,\tau_1,\tau_2,...]/(\tau_i^2=0)\]
Consider the classical extension of Hopf algebras \(P_* \rightarrow A_* \rightarrow E_*\) . We can regard all objects here as a bigraded, concentrated in degree 0 with respect to the second bidegree.\end{defin}
By our computation above \(H\Z/l_{**}\) is just a polynomial ring in one generator and hence flat over \(\Z/l\). Hence by application of \(-\underset{\Z/l}{\otimes}H\Z/l_{**}\) to the above extension, we obtain an extension of Hopf algebras \(P_{**}\rightarrow A_{**}\rightarrow E_{**}\) in the sense of \cite[A1.1.15]{RAV}, where the middle term is precisely the motivic Steenrod algebra over odd primes.

Associated to such an extension we immediately get a motivic counterpart to \cite[Theorem 4.4.3]{RAV}:

\begin{prop}
\begin{enumerate}
\item \(\text{Cotor}_{E_{**}}(H\Z/l_{**},H\Z/l_{**})\cong H\Z/l_{**}[a_0,a_1,..]\), where \(a_i \in \text{Cotor}_{E_{**}}^{1,2l^i-1,l^i-1}\) is represented by the cobar cycle \([\tau_i]\).
\item There is a motivic Cartan Eilenberg spectral sequence converging to
\[\text{Cotor}_{A_{**}}(H\Z/l_{**},H\Z/l_{**})\]
(the reindexed \(E_2\)-term of the motivic Adams spectral sequence) with the following \(E_2\)-page:
\[E_2^{s_1,s_2,t,u}=\text{Cotor}^{s_1,t,u}_{P_{**}}(H\Z/l_{**},\text{Cotor}^{s_2,*,*}_{E_{**}}(H\Z/l_{**},H\Z/l_{**}))\]
and differential \(d_r: E_r^{s_1,s_2,t,u}\rightarrow E_r^{s_1+r,s_2-r+1,t,u}\).
\item The \(P_{**}\)-coaction on \(\text{Cotor}_{E_{**}}(H_{**},H_{**})\) is given by
\[\psi(a_n)=\underset{i}{\sum}\xi_{n-i}^{l^i}\otimes a_i\]
\item The motivic Cartan Eilenberg SS collapses at the \(E_2\) page with no nontrivial extensions.
\end{enumerate}
\end{prop}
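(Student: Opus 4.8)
The plan is to prove the collapse and the triviality of the extensions in (4) by exhibiting a grading that the higher differentials cannot respect, and to cross-check this against the classical Cartan--Eilenberg spectral sequence by flat base change. The grading I would use is the count of \(\tau_i\)'s already introduced in the description of \(A_{**}\). Directly from \(\Delta(\xi_n)=\sum_i\xi_{n-i}^{l^i}\otimes\xi_i\) and \(\Delta(\tau_n)=\tau_n\otimes 1+\sum_i\xi_{n-i}^{l^i}\otimes\tau_i\) one sees that every summand of a product or a coproduct carries the same number of \(\tau\)'s, so both operations preserve this count exactly. Consequently the cobar differential of \(A_{**}\) preserves it, \(\text{Cotor}_{A_{**}}(H\Z/l_{**},H\Z/l_{**})\) splits as a direct sum indexed by the \(\tau\)-count, and the whole filtered cobar complex out of which the motivic Cartan--Eilenberg spectral sequence is built decomposes into \(\tau\)-count subcomplexes.

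The second step is to match this count with the homological degree \(s_2\). By (1) the inner term \(\text{Cotor}_{E_{**}}(H\Z/l_{**},H\Z/l_{**})=P(a_0,a_1,\dots)\) is generated by the classes \(a_i=[\tau_i]\), each of degree \(s_2=1\) and \(\tau\)-count \(1\), whereas the outer \(\text{Cotor}_{P_{**}}\)-direction is built only from the \(\xi_j\), of \(\tau\)-count \(0\). Hence every class in \(E_2^{s_1,s_2,t,u}\) has \(\tau\)-count exactly \(s_2\). The differential \(d_r\) carries \((s_1,s_2)\) to \((s_1+r,s_2-r+1)\), lowering \(s_2\) by \(r-1\ge 1\) when \(r\ge 2\); since the spectral-sequence differentials are induced by the \(\tau\)-count-preserving cobar differential of \(A_{**}\), they cannot change the \(\tau\)-count, forcing \(d_r=0\) for all \(r\ge 2\). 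The same count gives an internal direct-sum grading refining the abutment filtration, and because \(E_\infty=E_2\) is concentrated in \(\tau\)-count \(=s_2\) within each bidegree, this filtration splits; there are therefore no additive extension problems, and since products preserve the count there are no multiplicative ones either.

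As a consistency check in the spirit of the rest of the paper, one can note that over both \(\C\) and \(\R\) the extension \(P_{**}\to A_{**}\to E_{**}\) is the classical extension \(P_*\to A_*\to E_*\) tensored over \(\Z/l\) with \(H\Z/l_{**}\), which is the free algebra \(\Z/l[\tau]\) respectively \(\Z/l[\theta]\); flat base change commutes with forming the cobar complex, its homology, and the associated filtration, so the motivic spectral sequence is the classical one tensored with \(H\Z/l_{**}\) and the classical collapse \cite[Theorem 4.4.4]{RAV} transports verbatim. The point requiring genuine care is the identification in the second step: one must check that the \(d_r\) really are induced from the total cobar differential, so that they preserve the \(\tau\)-count exactly and not merely modulo \(2\) (the cruder parity argument, using that each \(\tau_i\) has odd internal degree so that \(t\equiv s_2\pmod 2\), only kills the even differentials). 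Once the \(\tau\)-count is seen to decompose the cobar complex into subcomplexes this is immediate, and it is exactly here that the odd-primary hypothesis enters, since it is what yields the clean exterior-times-polynomial form of \(A_{**}\) on which the whole grading argument rests.
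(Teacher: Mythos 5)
Your proof is correct and is in substance the paper's own argument: the paper offers no written proof of this proposition at all, instead defining \(P_{**}\rightarrow A_{**}\rightarrow E_{**}\) as the classical extension tensored with the flat coefficient ring \(H\Z/l_{**}\) and asserting that a motivic counterpart of Ravenel's Theorem 4.4.3 follows immediately --- which is exactly your closing flat-base-change paragraph (note that the classical collapse you invoke is \cite[Theorem 4.4.3]{RAV}, not 4.4.4, the latter being the algebraic Novikov spectral sequence). Your primary argument, the \(\tau_i\)-count grading, is Ravenel's own proof of that classical theorem transcribed motivically, and the paper has already prepared the ground for it in its description of \(A_{**}\), where it remarks that neither the product nor the coproduct changes the number of \(\tau_i\)'s and that \(A_{**}\) is thereby graded as a comodule, ``similar to the classical situation at odd primes.''
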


\subsection{The motivic algebraic Novikov spectral sequence}
The ideal
\[I=(l,v_1,v_2,..) \subset ABP_{l,**}^\wedge = \begin{cases}
\Z_l[\tau, v_1, v_2,...]\quad k=\C\\
\Z_l[\theta, v_1, v_2,...]\quad k=\R, l\neq 2
\end{cases}\]
is invariant in the sense of Hopf Algebroids with respect to \(ABP_{l,**}^\wedge (ABP_{l,**}^\wedge)\). Its powers define a decreasing filtration of \(ABP_{l,**}\). If we denote the class of \(l\) with \(q_0\) and the classes of \(v_i\)  with \(q_i\) for \(i\geq 1\), the associated graded of \(ABP^\wedge_{l,**}\) is:
\[E_0 ABP_{l,**}^\wedge \cong \begin{cases}\Z/l[\tau,q_0,q_1,q_2,...] \quad k=\C\\
\Z/l[\theta,q_0,q_1,q_2,...] \quad k=\R \end{cases} where\ \text{deg}(q_i) = (1,2(l^i-1))\]
By the same proof as in the classical situation in \cite[4.4.4]{RAV} this implies:
\begin{prop}
	Let \(k=\C\) and \(l\) be an arbitrary prime or let \(k=\R\) and let \(l\) be an odd prime. There is a spectral sequence called the motivic algebraic Adams-Novikov spectral sequence converging to \(Cotor_{ABP^\wedge_{l,**}ABP^\wedge_l}(ABP^\wedge_l,ABP^\wedge_l)\). The \(E_1\)-page is given by:
\[E_1=Cotor_{P_{**}}(H\Z/l_{**},E_0 ABP^\wedge_{l,**})\cong\text{Cotor}_{P_{**}}(H\Z/l_{**},\text{Cotor}^{s,t,u}_{E_{**}}(H\Z/l_{**},H\Z/l_{**}))\]
\end{prop}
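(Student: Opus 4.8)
The plan is to produce the spectral sequence directly from the $I$-adic filtration of the cobar complex recorded in the preceding remark, and then to read off its $E_1$-page by passing to the associated graded Hopf algebroid and comparing with the classical computation, transported along the tensor decomposition $C^*(ABP^\wedge_l)\cong C^*(BP)\otimes_{\Z_l}\Z_l[\tau]$ over \C~ (respectively $\otimes_{\Z_l}\Z_l[\theta]$ over \R).

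First I would invoke the standard fact that a decreasing filtration of a cochain complex gives rise to a spectral sequence whose $E_0$-term is the associated graded complex and whose $E_1$-term is the cohomology of that complex. The preceding remark exhibits the filtration of $C^*(ABP^\wedge_l)$ by powers of the invariant ideal $I=(l,v_1,v_2,\ldots)$ as a filtration by differential graded comodules, so this construction applies. Since the filtration is multiplicative and compatible with the coproduct, the associated graded complex is exactly the cobar complex of the associated graded Hopf algebroid, giving
\[
E_1=\text{Cotor}_{E_0 ABP^\wedge_{l,**}(ABP^\wedge_l)}(E_0 ABP^\wedge_{l,**},E_0 ABP^\wedge_{l,**}).
\]

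Next I would identify this associated graded Hopf algebroid. The factor $\tau$ (respectively $\theta$) lies in filtration zero and carries trivial coaction, so taking $I$-adic associated gradeds commutes with the base change $-\otimes_{\Z_l}\Z_l[\tau]$. Hence, by the classical splitting of the $I$-adic associated graded of $BP_*BP$ into the polynomial Hopf algebra part tensored with $E_0 BP_*$ \cite[Thm. 4.4.4]{RAV}, the associated graded above is the split Hopf algebroid attached to the coaction of $P_{**}$ on the comodule algebra $E_0 ABP^\wedge_{l,**}=H\Z/l_{**}[q_0,q_1,\ldots]$. The change-of-rings isomorphism for such a split Hopf algebroid then yields the first stated form
\[
E_1\cong\text{Cotor}_{P_{**}}(H\Z/l_{**},E_0 ABP^\wedge_{l,**}).
\]
For the second form I would use part (1) of the preceding Cartan--Eilenberg proposition, which gives $\text{Cotor}_{E_{**}}(H\Z/l_{**},H\Z/l_{**})=P(a_0,a_1,\ldots)$ with $P_{**}$-coaction $\psi(a_n)=\sum_i\xi_{n-i}^{l^i}\otimes a_i$. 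The reduction of $\eta_R(v_n)$ modulo higher filtration produces the identical coaction formula on the generators $q_n$, so $q_i\mapsto a_i$ is an isomorphism of $P_{**}$-comodule algebras and the two $\text{Cotor}_{P_{**}}$ expressions coincide.

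Finally, for convergence to $\text{Cotor}_{ABP^\wedge_{l,**}ABP^\wedge_l}(ABP^\wedge_l,ABP^\wedge_l)$ I would verify that the $I$-adic filtration is exhaustive and, in each fixed tridegree, complete. I expect this bookkeeping to be the main obstacle, precisely because $l\in I$ forces the filtration to interleave the $l$-adic topology of $\Z_l$ with the powers of the $v_i$; the cleanest way to control it is again the decomposition $C^*(ABP^\wedge_l)\cong C^*(BP)\otimes_{\Z_l}\Z_l[\tau]$, under which the filtration is the classical $I$-adic filtration tensored with the filtration-zero free factor $\Z_l[\tau]$. Convergence then follows from the classical convergence statement together with the universal coefficient theorem, the purely algebraic identification of $E_1$ being comparatively routine.
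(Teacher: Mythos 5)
Your proposal is correct and is essentially the paper's argument: the paper's proof consists of the single remark that the claim ``works as the classical proof'' with a citation to \cite[4.4.4]{RAV}, and what you have written out --- the $I$-adic filtration of the cobar complex, identification of the associated graded as the split Hopf algebroid over $P_{**}$, change of rings, matching of the coaction on the $q_i$ with that on the $a_i$, and convergence via the decomposition $C^*(ABP^\wedge_l)\cong C^*(BP)\otimes_{\Z_l}\Z_l[\tau]$ (resp.\ $\Z_l[\theta]$) --- is precisely that classical proof together with the motivic bookkeeping the paper leaves implicit.
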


Therefore we end up with essentially the same square of spectral sequences, regraded in regard to the weight degree and tensored with the coefficients of motivic homoloogy:
\[
\xymatrixcolsep{2pc}
\xymatrix{
\text{Cotor}_{P_{**}}(H\Z/l_{**},\text{Cotor}^{s,t}_{E_{**}}(H\Z/l_{**},H\Z/l_{**})) \ar@{=>}[rr]^{MAANSS}\ar@{=>}[d]^{MCESS} & & Cotor_{BP^\wedge_{l,**}BP^\wedge_l}(BP^\wedge_l,BP^\wedge_l) \ar@{=>}[d]^{MANSS} \\
 Cotor_{A_**}(H\Z/l_{**},H\Z/l_{**})\ar@{=>}[rr]^{MASS} && \pi_{**}(S^\wedge_l)\\
}
\]
Furthermore, just as in the classical topological situation the MCESS degenerates at the \(E_1\)-page.

\section{The motivic Adams-Novikov spectral sequence at odd primes}
\subsection{Elementary facts about the Adams-Novikov spectral sequence}
If \(E=ABP^\wedge_l\) and \( X=S\), the \(E\)-Adams spectral sequence is called the \(l\)-completed motivic Adams Novikov spectral sequence (from now on abbreviated to MANSS) and was studied for example in \cite[3]{OO},\cite{HKO2}. For \(l\)-low dimensional fields Ormsby and {\O}stv{\ae}r have described the \(E_2\)-page of the \(l\)-primary MANSS in terms of the \(E_2\)-page of the classical ANSS and of the coefficients \(H\Z_{l**}\) in \cite{OO}, in order to calculate \(\pi_{1,*}\) over these fields. We want to use their results. First we need to compute \(ABP^\wedge_{l,**}(ABP_{l,**}^ \wedge)\), which is known by a result of Naumann, Spitzweck and Østvær (\cite[Lemma 9.1]{NOS}), as well as the Hopf algebroid structure induced by \(ABP_l^\wedge\) :
\[ABP^\wedge_{l,**}(ABP^\wedge_l)\cong ABP^\wedge_{l,**}\underset{\pi_{*}(BP)}{\otimes}BP_*(BP)\]
Over \C~ and \R~ the Hopf algebroid structure is given by tensoring the Hopf algebroid of the Brown Peterson spectrum \(BP\) with \(\Z_l[\tau]\) resp. \(\Z_l[\theta]\) over \(\Z_l\).
\begin{rem}
\begin{enumerate}
\item The Hopf algebroid induced by \(ABP_l^\wedge\) is flat for \(k\in\{\R, \C\}\). 
\item Let \(k=\C\) and \(l\) be any prime. Then \(ABP_{H\Z/l}^{\wedge}\cong ABP^{\wedge}_l\) by \cite[Theorem 1]{HKO}, so this spectral sequence agrees with the \(H\Z/l\)-completed MANSS considered in \cite{DI}.
\item Let \(k=\R\) and \(l\) be an odd prime \(l\neq 2\).\\
By comment 1 on \cite[Theorem 1]{HKO} , the $H\Z/l$-nilpotent completion of the sphere spectrum \(S\) is a completion at $l$ and \(\eta\), and this completion is not equivalent to the completion at $l$ alone.
\end{enumerate}
\end{rem}

We recall some facts about the topological ANSS built from the spectrum \(BP\), in particular sparseness, which we are going to need in order to compare the classical ANSS to the MANSS. Because the structure of the classical and the motivic Adams-Novikov spectral sequence are very similar, we can then use these facts to prove similar statements for the MANSS.
\begin{rem}
	\begin{enumerate}
		\label{ANSSfacts}
		\item \cite[5.1.23 (a)]{RAV} There is a vanishing line of slope 1 for the \(E_2\)-term of the classical ANSS in Adams grading: \[Cotor^{s,s+s'}_{BP_*BP}(BP_*,BP_*)=0\quad if\ s'<s\]
		\item \(E_2^{0,0}=Z_{(l)}\) and \(E_2^{0,t}=0\) for \(t\neq 0\). (cf. \cite[5.2.1 (b)]{RAV})
		\item If \((s,t)\neq(0,0)\), then \(Cotor^{s,t}_{BP_*BP}(BP_*,BP_*)\) is a finite \(l\)-group. (cf. \cite[5.2.1 (a)]{RAV} together with the previous statement). In particular, if we construct the generalized Adams spectral sequence for \(BP_l^\wedge\), then this spectral sequence agrees with the ANSS in all bidegrees away from \((0,0)\), and in this bidegree there is a single copy of \(\Z_l\).
		\item \(E_2^{s,t}\neq 0\) only if \(t\) is divisible by \(q=2l-2\). This means that a differential  \(d_r\) can be nontrivial only if \(r\equiv 1(\mod q)\). Hence \(E_{nq+2}\cong E_{nq+3}\cong ... \cong E_{(n+1)q+1}\).\cite[4.4.2]{RAV}
	\end{enumerate}
\end{rem}

The following results take the same form as in the case \(l=2\), discussed in \cite{Isa} and \cite{HKO2}:
\begin{prop}
\label{MANSSfacts}
Assume that we know the \(E_2\)-page of the classical ANSS (associated to either \(BP\) or \(BP_l^\wedge\)) in a certain range. The \(E_2\)-page of the odd-primary MANSS over \C~ can then be constructed from this information as follows:
\begin{enumerate}
\label{E2PageDescription}
\item \(E_2^{0,0,u}=0\) if \(u>0\) and \(E_2^{0,0,u}\cong \Z_l\tau^u\) if \(u \leq 0 \). By the multiplicative structure of the MANSS, each \(E_2^{s,t,*}\) is a \(\Z_l[\tau]\)-module, so we can speak of \(\tau\)-torsion.
\item Let \( (s,t) \neq (0,0)\): For each group \(\Z/l^n\) in \(E_2^{s,t}\) of the ANSS, there is a group \(\Z/l^n[\tau]\) in \(E_2^{s,t}\) of the MANSS, and its generator as a \(\Z_l[\tau]\)-module has weight \(\frac{t}{2}\). There are no other groups in \(E_2^{s,t,u}\) of the MANSS.
\item The vanishing line in the classical ANSS carries over to the MANSS, so the MANSS converges strongly.
\label{ANSSVL}
\item \(E_2^{s,t,u}\neq 0\) only if \(t\) is divisible by \(q=2l-2\). This means that a differential  \(d_r\) can be nontrivial only if \(r\equiv 1 (\mod q)\). Hence \(E_{nq+2}\cong E_{nq+3}\cong ... \cong E_{(n+1)q+1}\).
\item The MANSS over \R\ maps injectively to the MANSS over \C.
\end{enumerate}
\end{prop}
\begin{proof}
For \(k\in\{\C,\R\}\) and \(l\) an odd prime we can use the computations of \(\pi_{**}(ABP^\wedge_l)\)  to determine the cobar complex of the Hopf algebroid associated with \(ABP_{l}^\wedge\):

\[C^*(ABP^\wedge_l)\cong C^*(BP)\underset{\pi_*(BP)}{\otimes}\pi_{**}(ABP^\wedge_l)\cong \begin{cases} C^*(BP)\underset{\Z_{l}}{\otimes}\Z_{l}[\tau] \quad |\ k=\C\\
C^*(BP)\underset{\Z_{l}}{\otimes}\Z_{l}[\theta] \quad|\ k=\R
\end{cases}\]
The map \( C^*(ABP^\wedge_{l,\R}) \rightarrow C^*(ABP^\wedge_{l, \C})\) maps \(\theta \mapsto \tau^2\).

Application of the universal coefficient theorem to the cobar complex then yields the following short exact sequences which relate the \(E_2\)-term \(E_2^{s,t}\) of the $l$-completed MANSS to the \(E_2\)-page \(E^{s,t}_{2,top}\) of the classical Adams-Novikov spectral:
\begin{itemize}
\item \(k=\C:\quad
0 \rightarrow E_{2,top}^{s,t}\underset{\Z_{l}}{\otimes}\Z_{l}\tau^n \rightarrow E_2^{s,t,\frac{t}{2}-n}\rightarrow \text{Tor}_1^{\Z_{l}}(E_{2,top}^{s-1,t},\Z_{l}\tau^n)\rightarrow 0
\)
\item \(k=\R:\quad 0\rightarrow E_{2,top}^{s,t}\underset{\Z_l}{\otimes}\Z_{l}\theta^n \rightarrow E_2^{s,t,\frac{t}{2}-n}\rightarrow \text{Tor}_1^{\Z_{l}}(E_{2,top}^{s-1,t},\Z_{l}\theta^n)\rightarrow 0\)
\end{itemize}
Note that \(E_{2,top}^{s,t}\) vanishes for odd \(t\) because of sparseness.

The torsion term vanishes in both cases because the second argument is free over the ground ring. This proves the first two statements. The third and fourth statement can either be proven directly by examining the cobar complex or they can be seen as a corollary of the first two. The last point follows from the fact that the map of exact couples which induces the map of spectral sequences is an isomorphism in each weight where the MANSS over \R\ does not vanish.
\end{proof}

\begin{rem}
For \(l\neq 2\) and \(k \in \{\C, \R \}\) the dual motivic Steenrod algebra can also be expressed in terms of the classical dual Steenrod algebra and the motivic homology of a point, so one can use the same arguments to show that the \(E_2\)-term of the motivic Adams spectral sequence is given by the \(E_2\)-term of the classical Adams spectral sequence tensored with \(\Z/l[\tau]\) over \C~ and \(\Z/l[\theta]\) over \R. The weights are dictated by the cobar representatives of the generators.
\end{rem}

\subsection{The structure of the motivic Adams-Novikov spectral sequence at odd primes}
The topological realization functor \(R: \mathcal{SH}_\C \rightarrow \mathcal{SH}_{top}\) induces a map of towers and hence a map of spectral sequences \(\Psi\) from the $l$-completed MANSS to the $l$-completed topological ANSS, which is just the classical ANSS away from the bidegree \((0,0)\). We want to use the arguments of the case \(l=2\) (\cite[Lemma 16]{HKO2}) to show that there can be no differentials in the motivic Adams-Novikov spectral sequence that vanish under \(\Psi\). Note that this implies that the \(\tau\)-primary torsion parts of the MANSS cannot support nontrivial differentials, because they map to zero under the map \(\Psi\).\\
For this argument we need the fact that all \(\tau\)-primary torsion vanishes in weight \(0\):

\begin{lemma}
Let \(l\) be an odd prime and \(k=\C\). 
\begin{enumerate}[ref=\thetheorem.\arabic*]
\item	\label{kernelIsTorsion}
The restriction of the map \(\Psi\) to weight 0 is an isomorphism of spectral sequences (where the ANSS is considered a trigraded spectral sequence concentrated in degree \(0\) with respect to the third grading), and any element in the kernel of \(\Psi\) must be \(\tau\)-torsion.
\item \label{diffImp}
\(d_{r}(x)=0 \implies d'_{r}(\Psi(x))=0\).
\item The element \(\tau \in E_2^{0,0,-1}\) is a permanent cycle in the MANSS and defines an element \(\tau \in \pi_{0,-1}(S^\wedge_l)\).
\end{enumerate}
\end{lemma}
\begin{proof}
\begin{enumerate}
\item In weight \(0\) topological realization induces an isomorphism already on the level of exact couples, so \(\Psi\) is an isomorphism of spectral sequences in weight \(0\) for all \(r\).\\
Assume \(\Psi(x)=0\) for an element \(x\) in weight \(u\). \(\Psi\) respects the product structure, so \(\Psi(\tau^u \cdot x)=\Psi(\tau^u)\cdot \Psi(x)=1\cdot \Psi(x)=0\). Because \(\Psi\) is an isomorphism in weight \(0\), it follows that \(\tau^u\cdot x=0\).
\item \(\Psi\) is a map of spectral sequences and commutes with differentials.
\item It is clear by the tridegree of \(\tau\) in the MANSS that it is a permanent cycle.
\end{enumerate}
\end{proof}

We can now prove that the motivic Adams-Novikov spectral sequence can be completely described in terms of the topological Adams-Novikov spectral sequence (compare \cite[Lemma 16]{HKO2} for the corresponding statement if \(l=2\)):
\begin{prop}
\label{PropChapter2}
Let \(l\) be an odd prime and \(k=\C\). 
\begin{enumerate}[ref=\thetheorem.\arabic*]
\item The \(E_r\)-page of the $l$-completed MANSS \(E^{s,t,*}_r\) contains the following elements (and only these) in each bidegree \((s,t)\):\label{ManssDescription}
\begin{itemize}
\item The subgroup of elements which are not \(\tau\)-primary torsion is given by \(E_{r,top}^{s,t}\underset{\Z_l}\otimes \Z_l[\tau]\), where the elements of \(E_{r,top}^{s,t}\) are considered to have weight \(\frac{t}{2}\).
\item Let \(q=2l-2\). For every nontrivial classical differential \[d_r': E_{r',top}^{s,t} \rightarrow E_{r',top}^{s+r',t+r'-1}\] with \(r'<r\) there is a \(\tau\)-primary torsion subgroup of order \(\frac{r'-1}{2}\). This group is generated by an element in \(E_r^{s+r',t+r'-1,\frac{t+r'-1}{2}}\) which neither supports nor receives any nontrival differential. (Note that \(r'=nq+1\) because of sparseness, so \(r'\) must be odd).
\end{itemize}
In particular, the \(\tau\)-primary torsion subgroups at \(r=nq+1\) have order of at most \(\frac{r-1}{2}\).
\item For any \( x \in E_{r}\), we have \(d'_{r}(\Psi(x))=0 \implies d_{r}(x)=0\). As noted before, this implies that a \(\tau\)-primary torsion element cannot support a nontrivial differential.
\item For each \(E^{s,t}_\infty\) of the classical ANSS, there is a group \(E^{s,t,\frac{t}{2}}_\infty[\tau]\) in the \(E_\infty\)-page of the MANSS. The \(\tau\)-primary torsion subgroup of the \(E_\infty\)-page is generated by the groups above.
\end{enumerate} 
\end{prop}
\begin{proof}
We prove the first two claims by induction over \(r\). The third claim then follows from the other two.\\
The first claim is true for \(r=2\) by \ref{E2PageDescription} and  the second one is by \ref{kernelIsTorsion}. In particular there is no \(\tau\)-primary torsion on the \(E_2\)-page.\\
Because the \(E_2\)-page is sparse, all the differentials are zero for degree reasons if \(r\neq nq+1\), and the induction step is trivial. We can therefore restrict to the case \(r=nq+1\). Assume all the claims are true for \(r=nq+1\), and we wish to show that they also hold for \(r+1\).\\
Let \[E_{r,top}^{s,t}\ni x  \overset{d'_r}\longrightarrow y \in E_{r,top}^{s+r,t+r-1}\] be a nontrivial differential in the topological Adams Novikov spectral sequence. By \ref{diffImp} there is a nontrivial differential \[E_{r}^{s,t,0}\ni \Psi^{-1}(x)  \overset{d_r}\longrightarrow \Psi^{-1}(y) \in E_{r}^{s+r,t+r-1,0}\] in the motivic Adams Novikov spectral sequence for unique elements \(\Psi^{-1}(x)\)  and \(\Psi^{-1}(y)\) in weight 0. By the first part of the induction assumption there are elements \(\tilde{x} \in E_{r}^{s,t,\frac{t}{2}}\) and \(\tilde{y} \in E_{r}^{s+r,t+r-1,\frac{t+r-1}{2}}\) s.t. \(\tau^{\frac{t}{2}}\tilde{x} = \Psi^{-1}(x)\) and \(\tau^{\frac{t+r-1}{2}}\tilde{y}=\Psi^{-1}(y)\), where \(t+r-1=t+nq\) is divisible by two since both \(t\) and \(q\) are. Because \(\Psi(\tau)=1\) and because there can be no \(\tau\)-primary torsion in the target at weight \(\frac{t}{2}\) by the first part of the induction assumption, this implies \(d_r(\tilde{x}) = \tau^{\frac{r-1}{2}}\tilde{y}\). Hence \(\tilde{y}\) generates a \(\tau\)-primary torsion group of order \(\frac{r-1}{2}\) on the \(E_{r+1}\)-page of the MANSS. This proves the \(\tau\)-primary part of the description of the \(E_{r+1}\)-page. The rest of the description then follows from the second part of the induction assumption.\\
To prove the second part of the induction, consider a nontrivial differential in the heighest possible weight \(d_r: E^{s,t,\frac{t}{2}} \rightarrow E^{s+r,t+r-1,\frac{t}{2}}\) in the MANSS. By the induction assumption there is no \(\tau\)-primary torsion in this or in lower weights, so multiplication by \(\tau^\frac{t}{2}\) is an isomorphism between weight \(\frac{t}{2}\) and weight 0. By \ref{kernelIsTorsion} \(\Psi\) is an isomorphism in weight 0, which implies the statement of the second part.
\end{proof}

\begin{prop}
\label{PropChapter2addendum}
Consider k=\R\ and \(l\neq 2 \). The results of the previous proposition apply if we replace \(\tau\) with \(\theta\).  
\end{prop}
\begin{proof}
By point 5 of \ref{MANSSfacts}, the map of MANSSs induced by \(\R \rightarrow \C\) is the inclusion of the terms in even weight on each page.
\end{proof}
Now let \(k=\C\) as before. It is known by a result of Morel (\cite{Mor2}) that \(\pi_{s',u}(S)=0\) if \(u>s'\). For the \(l\)-completed sphere this can also be deduced from the vanishing line of the ANSS:

\begin{prop}
Let k = \C\  and \(l\neq 2\). Then \(\pi_{s',u}(S^\wedge_l)=0\) if \(u>s'\).
\end{prop}
\begin{proof}
If we index the MANSS according to the usual Adams grading \((s,s'):=(s,t-s)\), the column \(s'\) converges to \(\pi_{s',*}(S^\wedge_l)\). 
By \ref{MANSSfacts} the entry \(E^{s,t,u}_2\) of the \(E_2\)-page of the MANSS is zero if \(t<2s\) or equivalently \(s'<s\). Because the weight of a generator in \(E^{s,s',*}_2\) is \(\frac{s'+s}{2}\) (and the weight of arbitrary elements is lesser or equal to this) and because the weight grows with \(s\), a nonzero element in the column \(s'\) can at most have weight \(s'\), if the element lies in the maximal nonzero filtration degree \(s=s'\).
\end{proof}

\begin{prop}
\label{tauIsoProp}
Let k = \C\  and \(l\neq 2\). On \(\pi_{s',u}(S^\wedge_l)\) multiplication by \(\tau\) is an isomorphism if \(u\leq \frac{s'}{2}+1\) and \(s'>0\), or if \(s'=0\) and \(u\leq 0\). Since the groups \(\pi_{s',0}(S^\wedge_l)\) in weight 0 are known to be the (completed) classical groups by \cite{LEV} this determines the motivic groups in the given range.
\end{prop}
\begin{proof}
	In a given bidegree \((s, t)\) the lowest weight where \(\tau\)-primary torsion can occur is \(\frac{t-r+1}{2}+1\) by \ref{ManssDescription}, if there is a nontrivial differential \(d_r\) with source in bidegree \((s-r, t-r+1)\). If we index the MANSS according to the usual Adams grading \((s,s'):=(s,t-s)\), this weight is equal to \(\frac{s'+(s-r)+1}{2}+1\), and \(s-r\) corresponds to the line where the source of the differential is.
	
	There are no elements in the 0-line for \(s>0\). If the differential has source in the 1-line (\(s-r=1\)) the lowest absolute weight where \(\tau\)-primary torsion can appear is \(\frac{s'+2}{2}+1=\frac{s'}{2}+2\), and for higher lines, it is larger. Therefore there cannot be \(\tau\)-primary torsion in weight lower or equal to \(\frac{s'}{2}+1\) for fixed \(s'\).
\end{proof}

\begin{rem}
Because the map from the MANSS over \R\ to the MANSS over \C\ is just the inclusion of	the even weighted parts of the latter, the same statements hold over \R\ if we replace \(\tau\) with \(\theta\).
\end{rem}

\subsection{Examples of differentials and elements in the motivic Adams Novikov spectral sequence}

Let k=\C\ and let \(l\neq 2\) be any odd prime. The first nontrivial differential in the topological Adams Novikov spectral sequence (cf. \cite[Thm. 4.4.22]{RAV}) is \[d_{2l-1}(\beta_{l/l})=a\alpha_1\beta_1^l\] where \(a\) is  an unknown constant. In the classical Adams grading \(s,t-s,u\) the motivic analogues of these elements live in the following degrees and weights:
\begin{table}[h]
\centering
\begin{tabular}{ l | c | c | c }
	Name & s & t-s & u\\
	\hline
	\(\alpha_1\) & \(1\) & \(2l-3\) & \(l-1\) \\
	\(\beta_1\) & \(2\) & \(2l^2-2l-2\) & \(l^2-l\) \\
	\(\beta_{l/l}\) & \(2\) & \(2l^3-2l^2-2\) & \(l^3-l^2\)\\
\end{tabular}
\end{table}
Application of \ref{PropChapter2}  then yields the following example of a \(\tau\)-torsion group:
\begin{example}
	\label{firstExample}
In the stem \(t-s = 2l^3-2l^2-3\) there is a $\tau$-torsion group of order $l-1$. If we insert for instance \(l=3\) this yields a permanent cycle in the MANSS in degree \(s=7, t-s=33, u=20\) that is annihilated by \(\tau^2\).	
\end{example}

\begin{rem}Any classical differential of the form \(\Z/l^2 \overset{\cdot l}{\rightarrow}\Z/l^2\) in the ANSS would imply an entry of the form \(\Z/l[\tau]/(l\tau^n)\) in the MANSS. However, we do not know an example of such a differential.
\end{rem}

Consider the classical ANSS and the elements representing the Hopf maps. The motivic counterpart of the Hopf maps have been defined in \cite{DI3}. For \(l=3\), the element \(\alpha_1\) lives in bidegree \(s=1, t-s=3\) and is not involved in any differentials. It represents the Hopf map $\nu$. Its motivic counterpart has the same bidegree \((s,t-s)\) and weight 2. Similarly, the element \(\alpha_2\) has bidegree \(s=1, t-s=7\). It is a permanent cycle and represents the Hopf map \(\sigma\). Its motivic counterpart has weight 4.\\
For $l=5$ the element \(\alpha_1\) in the degree \(s=1, t-s=7\) similarly is a permanent cycle, representing the Hopf map \(\sigma\). Its motivic counterpart also has weight 4. At odd primes, products of the \(\alpha\)-elements all vanish in the classic Adams Novikov spectral sequence. Because there is no \(\tau\) on the \(E_2\) page of the motivic Adams Novikov spectral sequence, this is true motivically as well. We can conclude:
\begin{rem}
	All three of this elements are nilpotent in the MANSS.
\end{rem}

Let us now consider the real case \(k=\R\) and once again let \(l\neq 2\) be any odd prime. The element \(\alpha_1\) of the MANSS is an infinite cycle. It's classical counterpart generates the \(l\)-torsion group in the classical stable stem \(2l-3\), the lowest stem in which \(l\)-torsion occurs. Since this element lives in positive weight the result \cite[Theorem 1.6]{ORO}, cited as \ref{OroResult}, of Ormsby, Röndigs and {\O}stv{\ae}r  applies. This means that \(\alpha_1\) actually represents an element in the \(l\)-completed stable motivic stems, not just in the \(l\)- and \(\eta\)-completed ones, and generates \(\pi_{2l-3,2}(S_l^{\wedge})\cong\Z/l\).

The element \(\theta\) is a permanent cycle because of its position in the spectral sequence, just like \(\tau\) is over \C. If we multiply \(\alpha_1\) by powers of \(\theta\) we still get a permanent cycle. The product \(\theta\alpha_1\) has weight 0. In this weight the \(l\)-completed topological stable stems are generated by \(\alpha_1^{top}\) and in particular do not vanish anymore.
In this case the proof of of Ormsby, Röndigs and {\O}stv{\ae}rs result tells us that \(\pi_{2l-3,0}(S_l^\wedge[\tfrac{1}{\eta}])\cong \pi_{m-n}(S^\wedge_l)\cong \Z/l\). The isomorphism \[\pi_{2l-3,0}(S_l^{\wedge})\cong \pi_{2l-3,0}(S_{l, \eta}^\wedge)\oplus \pi_{2l-3,0}(S_l^\wedge[\tfrac{1}{\eta}])\] therefore implies:
\[\pi_{2l-3,0}(S_l^{\wedge})\cong\Z/l \oplus \Z/l.\]

\ \\
\textit{Sven-Torben Stahn\\
	Fachgruppe Mathematik und Informatik\\
	Bergische Universität Wuppertal\\}
SvenTorbenStahn@gmail.com

\end{document}